\theoremstyle{plain}
\numberwithin{equation}{section}
\DeclareSymbolFontAlphabet{\mathbb}{AMSb}
\DeclareSymbolFontAlphabet{\mathbbl}{bbold}
\newtheorem{thm}{Theorem}[section]
\newtheorem{lem}[thm]{Lemma}
\newtheorem{prop}[thm]{Proposition}
\theoremstyle{definition}
\newtheorem{defn}[thm]{Definition}
\theoremstyle{remark}
\newtheorem{remark}[thm]{Remark}
\theoremstyle{plain}
\numberwithin{equation}{section}      
\let\lvert=|\let\rvert=|
\begin{document}

\title{Dynamical Systems and $\lambda$-Aluthge transforms}

\author[Linh Tran]{Linh Tran}
\address{*\\
         Department of Mathematics\\
         Chungnam  National University\\
         Daejon 305-764, Republic of Korea}
\email{tolinh.tran91@gmail.com}             



\keywords{Aluthge transform, quasi-hyperbolic, $\lambda$-Aluthge transform, bounded shadowing property}

\subjclass{Primary: 47A15; Secondary: 47B49, 47B37, 37C20}

\begin{abstract}
In this note, we verify that the bounded shadowing property and quasi-hyperbolicity of bounded linear operators on Hilbert spaces are preserved under Aluthge transforms.
\end{abstract}
\maketitle

\section{Introduction}
Let $\left( \mathbb{H}, \left< . \right> \right)$ be a Hilbert space and $B \left( \mathbb{H} \right)$ be the algebra of bounded linear operators on $\mathbb{H}$. For each $T \in B \left( \mathbb{H} \right)$, the polar decomposition of $T$ is uniquely defined by
\begin{linenomath}
\begin{align*}
    T = U \left| T \right|
\end{align*}
\end{linenomath}
in which $\left| T \right| = \left( T^*T \right) ^{\frac{1}{2}}$ and $\ker \left( U \right) = \ker \left( \left| T \right| \right) = \ker \left( T \right)$. Without further mention, the spectrum of $T$ is
\begin{linenomath}
\begin{align*}
    \sigma \left( T \right) = \left \{ \lambda \in \mathbb{C} \mid \nexists \left( T -\lambda I \right) ^{-1} \in B \left( \mathbb{H} \right) \right \}.
\end{align*} 
\end{linenomath}
 During the last 30 years, Operator Theorists have paid their attention to Aluthge transforms because they convert the initial operators into one which are closer to normal operators. I. B. Jung \cite[Theorem 2.1]{IBJ2} showed that
\begin{linenomath}
\begin{align*}
    \sigma \left( T \right) = \sigma \left( \Delta \left( T \right) \right)
\end{align*}
\end{linenomath}
where $\sigma \left( . \right)$ are the corresponding spectra. Moreover, Jung-Ko-Pearcy \cite[Corollary 1.16]{IBJ1} proved that $T$ has nontrivial invariant subspaces if and only if so does $\Delta \left( T \right)$.\\
On the other sides, another reason relates with the Aluthge iterates defined by
\begin{linenomath}
\begin{align*}
    \begin{cases}
\Delta ^{(0)} \left( T \right) = T \\ 
\Delta ^{(n)} \left( T \right) = \Delta \left( \Delta ^{(n-1)} \left( T \right) \right), \quad\quad\forall n \ge 1,  n \in \mathbb{Z}^{+}.
\end{cases} 
\end{align*}
\end{linenomath}
The Aluthge iterates of bounded linear operators are convergent to normal operators for $\dim \left( \mathbb{H} \right) < \infty$ that has been verified in \cite[Theorem 4.2]{Antezana1} but not in general as shown in \cite[Corollary 3.2]{IBJ2}.\\We have the following remark.
\begin{remark}
\label{remark homogenous of Aluthge transform}
Let $T = U \left| T \right| \in B \left( \mathbb{H} \right)$ be the polar decomposition of $T$.
\begin{enumerate}[label=(\roman*)]
    \item For every $\alpha \in \mathbb{C}$, $\Delta \left( \alpha T \right) = \left| \alpha \right| \Delta \left( T \right)$;
    \item \cite{IBJ1} $T$ is quasinormal if and only if $U \left| T \right| = \left| T \right| U$. \\
    Hence, $\Delta \left( T \right) = T$;
    \item \cite{Yamazaki1} $\lim_{n \rightarrow \infty} \left \| \Delta ^{\left( n \right)} \left( T \right) \right \| = r \left( T \right)$.
    \end{enumerate}
\end{remark}
Regarding to the spectral properties, we have the following remark.
\begin{remark} \cite[Theorem 1.3]{IBJ1}
\label{remark spectral property}
Let $T \in B \left( \mathbb{H} \right)$ be a bounded linear operator with the polar decomposition $T = U \left| T\right|$ and $\Delta \left( T \right) = \left| T \right|^{\frac{1}{2}}U \left| T \right|^{\frac{1}{2}}$ be the Aluthge transform of $T$. Then the following properties hold.
\begin{enumerate}[label=(\roman*)]
    \item $\sigma \left( T \right) = \sigma \left( \Delta \left( T \right) \right)$;
    \item $\sigma_{ap} \left( T \right) = \sigma_{ap} \left( \Delta \left( T \right) \right)$;
    \item $\sigma _{p} \left( T \right) = \sigma _{p} \left( \Delta \left( T \right) \right)$.
\end{enumerate} 
where $\sigma_{ap} \left( . \right)$ (resp. $\sigma_{p} \left( . \right)$) are the corresponding approximate point (resp. point) spectra.
\end{remark}
Aluthge transforms and Aluthge iterates of bounded linear operators $T$ on Hilbert space $\mathbb{H}$ were generalized to $\lambda$-Aluthge transforms and $\lambda$-Aluthge iterates.
\begin{defn}
\label{def lambda Aluthge transform and lambda aluthge iterates}
Let $T = U \left| T \right| \in B \left( \mathbb{H} \right)$ be the polar decomposition of $T$. For every $\lambda \in \left( 0,1 \right)$, the $\lambda$-Aluthge transform of $T$ is defined by
\begin{linenomath}
\begin{align*}
    \Delta _{\lambda} \left( T \right) = \left| T \right| ^{\lambda} U \left| T \right| ^{1-\lambda}.
\end{align*}
\end{linenomath}
For each $\lambda \in \left( 0,1 \right)$, the $\lambda$-Aluthge iterates of $T$ is determined by
\begin{linenomath}
\begin{align*}
    \begin{cases}
\Delta_{\lambda}^{(0)} \left( T \right) = T \\ 
\Delta_{\lambda} ^{(n)} \left( T \right) = \Delta_{\lambda} \left( \Delta_{\lambda} ^{(n-1)} \left( T \right) \right), \quad\quad\forall n \ge 1,  n \in \mathbb{Z}^{+}.
\end{cases} 
\end{align*}
\end{linenomath}
\end{defn}

On the other hand, dynamicists recently published various results related to the behaviors of Dynamical Systems under Aluthge transforms such as in \cite{Bernades1, Bernades2, Cirilo1}. In fact, I. B. Jung et al. verified that $T$ and its Aluthge transform $\Delta \left( T \right)$ are similar if $T$ is invertible in \cite[Lemma 1.1]{IBJ1}. Therefore, they share shadowing property, topological transitivity and chaotic behaviors. In addition, K. Lee and C. A. Morales introduced bounded shadowing property in \cite{lm}.
\begin{defn}
\label{defn bounded shadowing property}
A homeomorphism $g: Y \longrightarrow Y$ has the bounded shadowing property if for every $\epsilon >0$, there is $\delta >0$ such that every bounded $\delta$-pseudo orbit can be $\epsilon$-shadowed.
\end{defn}
Actually, every shadowing operators has bounded shadowing property but the converse does not held. In the light of these results, our first result verifies that bounded shadowing property is preserved under $\lambda$-Aluthge transform in Theorem \ref{theorem bounded shadowing is preserved}.\\
In addition, using the spectral properties of $\lambda$-Aluthge transform (for every $\lambda \in \left( 0,1 \right)$), we show that quasi-hyperbolicity is invariant under the action of $\lambda$-Aluthge transforms in Proposition \ref{prop quasihyperbolic is preserved}.
\section{Main results}
Let us begin this section by recalling that on metric spaces $\left( X, d_1 \right)$ and $\left( Y, d_2 \right)$, a homeomorphism $L: \left( X, d_1 \right) \longrightarrow \left( Y, d_2 \right)$ is called Lipeomorphism if $L$ satisfies Lipschitz condition, i.e for every $x,y \in X$, there are constants $A,B>0$ such that
\begin{align*}
    Ad_1 \left( x,y \right) \le d_2 \left( L \left( x \right), L \left( y \right) \right) \le B d_1 \left( x,y \right).
\end{align*}
More precisely, $L$ is Lipeomorphism if and and only there is a constant $L>0$ so that
\begin{align*}
    d_2 \left( L \left( x \right) , L \left( y \right) \right) \le L d_1 \left( x,y \right).
\end{align*}
Lipeomorphisms draw the attention of mathematicians because they preserve boundedness and completeness of the domain. Furthermore, Lee-Morales verify that bounded shadowing property is invariant under certain topological conjugacies as shown in the following lemma.
\begin{lem} \cite[Lemma 6]{lm}
\label{lem lipeomorphism}
Let $Y$, $Z$ be metric spaces and $H: Z \longrightarrow Y$ be a Lipeomorphism (i.e Lipschitz homeomorphism with Lipschitz inverse). If $P: Z \longrightarrow Z$ is a homeomorphism with the bounded shadowing property, then so does $H \circ P \circ H^{-1}: Y \longrightarrow Y$.
\end{lem}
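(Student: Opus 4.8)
The plan is to use the conjugacy $Q := H \circ P \circ H^{-1} \colon Y \to Y$ to transport a bounded pseudo orbit of $Q$ backwards through $H^{-1}$ into a bounded pseudo orbit of $P$, apply the bounded shadowing property of $P$ there, and then push the resulting shadowing point forward through $H$. Unwinding the bi-Lipschitz condition defining the Lipeomorphism $H$, there are constants $A, B > 0$ with
\begin{align*}
    A\, d_1(x,y) \le d_2\bigl(H(x), H(y)\bigr) \le B\, d_1(x,y), \qquad x, y \in Z,
\end{align*}
so $H$ is $B$-Lipschitz while $H^{-1}$ is $(1/A)$-Lipschitz; in particular both $H$ and $H^{-1}$ carry bounded sets to bounded sets. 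The two algebraic facts I would lean on are the intertwining relation $H^{-1} \circ Q = P \circ H^{-1}$ and its iterate $Q^{n} = H \circ P^{n} \circ H^{-1}$, valid for all $n \in \mathbb{Z}$ since $H$ and $P$ are invertible.

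Fix $\epsilon > 0$. First I would put $\epsilon' = \epsilon / B$ and invoke the bounded shadowing property of $P$ to obtain $\delta' > 0$ such that every bounded $\delta'$-pseudo orbit of $P$ is $\epsilon'$-shadowed. Then I would set $\delta = A\, \delta'$ and claim this $\delta$ works for $Q$. Indeed, given a bounded $\delta$-pseudo orbit $(y_n)_{n \in \mathbb{Z}}$ of $Q$, I would put $x_n = H^{-1}(y_n)$; this sequence stays bounded because $H^{-1}$ is Lipschitz, and using $P(x_n) = H^{-1}(Q(y_n))$ together with the Lipschitz constant of $H^{-1}$,
\begin{align*}
    d_1\bigl(P(x_n), x_{n+1}\bigr) = d_1\bigl(H^{-1}(Q(y_n)), H^{-1}(y_{n+1})\bigr) \le \tfrac{1}{A}\, d_2\bigl(Q(y_n), y_{n+1}\bigr) \le \tfrac{1}{A}\, \delta = \delta',
\end{align*}
so $(x_n)$ is a bounded $\delta'$-pseudo orbit of $P$.

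By the choice of $\delta'$ there is then $z \in Z$ with $d_1\bigl(P^{n}(z), x_n\bigr) \le \epsilon'$ for all $n$. Setting $w = H(z)$ and using $Q^{n}(w) = H(P^{n}(z))$ with the $B$-Lipschitz bound for $H$,
\begin{align*}
    d_2\bigl(Q^{n}(w), y_n\bigr) = d_2\bigl(H(P^{n}(z)), H(x_n)\bigr) \le B\, d_1\bigl(P^{n}(z), x_n\bigr) \le B\, \epsilon' = \epsilon,
\end{align*}
so $w$ would $\epsilon$-shadow $(y_n)$, giving $Q$ the bounded shadowing property.

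The only place where the Lipeomorphism hypothesis is genuinely needed, rather than mere topological conjugacy, is the preservation of boundedness when pulling the orbit back by $H^{-1}$: a general homeomorphism could turn a bounded pseudo orbit into an unbounded one, which would fall outside the scope of the bounded shadowing property of $P$. The uniform two-sided Lipschitz control is exactly what lets me keep the orbit bounded and, simultaneously, choose $\delta$ independently of the particular pseudo orbit. I therefore expect the boundedness check and the bookkeeping of the constants $A$, $B$ to be the only substantive points; the remainder is the standard conjugation argument.
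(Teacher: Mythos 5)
Your proof is correct: the conjugacy bookkeeping ($\delta = A\delta'$, $\epsilon' = \epsilon/B$, the intertwining $Q^{n} = H \circ P^{n} \circ H^{-1}$) is exactly right, and you correctly identify that the bi-Lipschitz hypothesis is needed precisely to keep the pulled-back pseudo orbit bounded, which a mere topological conjugacy would not guarantee. Note that the paper itself offers no proof of this lemma --- it is imported verbatim as \cite[Lemma 6]{lm} from Lee--Morales --- so your argument, which is the standard one and presumably the one given in that source, supplies a complete justification where the paper only cites.
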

As a consequence, we have our first result.
\begin{thm}
\label{theorem bounded shadowing is preserved}
Let $T = U \left| T \right| \in B \left( \mathbb{H} \right)$ be the polar decomposition of invertible bounded linear operator $T$. Then $T$ has bounded shadowing property if and only if $\Delta \left( T \right)$ does.
\end{thm}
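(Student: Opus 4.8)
The plan is to reduce the statement to the abstract transfer principle of Lemma \ref{lem lipeomorphism} by exhibiting an explicit linear similarity between $T$ and $\Delta(T)$ whose implementing operator is a Lipeomorphism of $\mathbb{H}$ regarded as a metric space. First I would record that, since $T$ is invertible, its modulus $|T| = (T^*T)^{1/2}$ is a positive invertible operator, so its square root $S := |T|^{1/2}$ is bounded, positive, and invertible with bounded inverse $S^{-1} = |T|^{-1/2}$; moreover $U$ is unitary in this case. A direct computation then gives
\begin{align*}
S T S^{-1} = |T|^{1/2} \, U |T| \, |T|^{-1/2} = |T|^{1/2} U |T|^{1/2} = \Delta(T),
\end{align*}
so $T$ and $\Delta(T)$ are similar via $S$, recovering the similarity of \cite[Lemma 1.1]{IBJ1} with the concrete intertwiner $S = |T|^{1/2}$.

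Second, I would verify that $S$ is a Lipeomorphism of $(\mathbb{H}, \|\cdot\|)$ onto itself. Being a bounded linear bijection, $S$ is a homeomorphism, and for all $x, y \in \mathbb{H}$ one has $\|Sx - Sy\| = \|S(x-y)\| \le \|S\| \, \|x-y\|$, so $S$ is Lipschitz; the same estimate applied to $S^{-1}$ (which is bounded) shows the inverse is Lipschitz as well. Thus $S$ meets the hypothesis of a Lipschitz homeomorphism with Lipschitz inverse required by Lemma \ref{lem lipeomorphism}. I would also note that both $T$ and $\Delta(T)$ are themselves homeomorphisms of $\mathbb{H}$: $T$ is invertible by hypothesis, and $\Delta(T) = S T S^{-1}$ is a product of invertibles (equivalently $0 \notin \sigma(T) = \sigma(\Delta(T))$ by Remark \ref{remark spectral property}), so the bounded shadowing property is meaningful for each.

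Finally, I would apply Lemma \ref{lem lipeomorphism} in both directions. Taking $H = S$ and $P = T$, the lemma gives that $H \circ P \circ H^{-1} = \Delta(T)$ has the bounded shadowing property whenever $T$ does. Conversely, since $S^{-1}$ is again a Lipeomorphism and $S^{-1} \Delta(T) S = T$, taking $H = S^{-1}$ and $P = \Delta(T)$ yields the reverse implication, closing the equivalence.

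The computation and the transfer are essentially routine once the pieces are lined up; the single point demanding care—and the step I expect to be the genuine obstacle—is confirming that a bounded linear similarity really fits the metric-space framework of Lemma \ref{lem lipeomorphism}, namely that both $S$ and $S^{-1}$ are \emph{globally} Lipschitz (not merely continuous) on all of $\mathbb{H}$. This is precisely where invertibility of $T$ is indispensable: without it, $|T|^{1/2}$ need not have a bounded inverse, the inverse map would fail to be Lipschitz, and Lemma \ref{lem lipeomorphism} could not be invoked.
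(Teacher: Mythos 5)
Your proposal is correct and takes essentially the same route as the paper: both exhibit the similarity $\Delta\left( T \right) = \left| T \right|^{\frac{1}{2}} T \left| T \right|^{-\frac{1}{2}}$, verify that $\left| T \right|^{\frac{1}{2}}$ (invertible with bounded inverse because $T$ is invertible) is a Lipeomorphism, and transfer the bounded shadowing property via Lemma \ref{lem lipeomorphism}. If anything, your write-up is slightly more complete, since you explicitly apply the lemma in both directions, whereas the paper records both similarity identities but spells out only the forward implication.
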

\begin{proof}
Let us assume that $T = U \left| T \right| \in B \left( \mathbb{H} \right)$ be the polar decomposition of invertible bounded linear operator $T$. Then $\left| T \right| = \left( T^* T \right) ^{\frac{1}{2}}$ and $\left| T \right| ^{\frac{1}{2}}$ are invertible bounded linear operators either.
Moreover, since $T$ is bounded, we have
\begin{align*}
    \left \| T \left( x \right) - T \left( y \right) \right \| = \left \| T \left( x-y \right) \right \| \le \left \| T \right \| \left\| x-y \right\|, \quad \forall x, y \in \mathbb{H}.
\end{align*}
Similarly,
\begin{align*}
    \left \| \left| T \right| ^{\frac{1}{2}} \left( x \right) - \left| T \right| ^{\frac{1}{2}} \left( y \right) \right \| = \left \| \left| T \right| ^{\frac{1}{2}} \left( x-y \right) \right \| \le \left \| \left| T \right| ^{\frac{1}{2}} \right \| \left \| x-y \right\|, \quad \forall x,y \in B \left( \mathbb{H} \right).
\end{align*}
So, both $T$ and $\left| T \right|^{\frac{1}{2}}$ satisfy Lipschitz condition and turn out to be Lipeomorphism. Additionally,
\begin{align*}
    &\Delta \left( T \right) = \left| T \right|^{\frac{1}{2}} U \left| T \right|^{\frac{1}{2}} = \left| T \right|^{\frac{1}{2}} \left( U \left|T \right| \right) \left| T \right| ^{-\frac{1}{2}};\\
    &T = U \left| T \right| = \left| T \right|^{-\frac{1}{2}} \left( \left| T \right| ^{\frac{1}{2}} U \left| T \right|^{\frac{1}{2}} \right) \left| T \right|^{\frac{1}{2}}.
\end{align*}

Now, let us suppose that $T$ has bounded shadowing property. Using Lemma \ref{lem lipeomorphism}, we completed the proof and got the first result.

\end{proof}
In fact, the spectral picture of bounded linear operator $T \in B \left( \mathbb{H} \right)$ under Aluthge transformation has been described in \cite{IBJ2}. It can be observed that not only the spectrum $\sigma \left( T \right)$ but also its components $\sigma_p \left( T \right)$ (point spectrum of $T$) and $\sigma_{ap} \left( T \right)$ (appropriate spectrum of $T$) are preserved. We now show that the spectral properties of $T$ are preserved under $\lambda$-Aluthge transforms for all $\lambda \in \left( 0,1 \right)$.
\begin{thm}
\label{theorem spectral property of lambda Aluthge}
Let $T= U \left| T \right| \in B \left( \mathbb{H} \right)$ be the polar decomposition of $T$. For every $\lambda \in \left( 0,1\right)$, we suppose that the $\lambda$-Aluthge transform of $T$ is $\Delta_{\lambda} \left( T \right) = \left| T \right| ^{\lambda} U \left| T \right| ^{1-\lambda}$. Then the following properties held.
\begin{enumerate}[label=(\roman*)]
    \item $\sigma \left(T \right) = \sigma \left( \Delta_{\lambda} \left( T \right) \right)$;
    \item $\sigma _{ap} \left( T \right) = \sigma _{ap} \left( \Delta _{\lambda} \left( T\right) \right)$.
\end{enumerate}
\end{thm}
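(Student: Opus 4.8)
The plan is to realize both $T$ and $\Delta_{\lambda}(T)$ as the two orderings of a single product, and then to invoke the classical fact that $\sigma(AB)$ and $\sigma(BA)$ (respectively $\sigma_{ap}(AB)$ and $\sigma_{ap}(BA)$) agree away from the origin, handling the point $0$ separately. Concretely, I would set
\begin{align*}
    A = \left| T \right|^{\lambda}, \qquad B = U \left| T \right|^{1-\lambda},
\end{align*}
so that, since $\left| T \right|^{\lambda}$ and $\left| T \right|^{1-\lambda}$ commute,
\begin{align*}
    AB = \left| T \right|^{\lambda} U \left| T \right|^{1-\lambda} = \Delta_\lambda \left( T \right), \qquad BA = U \left| T \right|^{1-\lambda} \left| T \right|^{\lambda} = U \left| T \right| = T.
\end{align*}
This is the exact analogue of the decomposition behind Remark \ref{remark spectral property}, and it drives everything. (Equivalently, when $\left| T \right|$ is invertible one has the similarity $\Delta_\lambda(T) = \left| T \right|^{\lambda} T \left| T \right|^{-\lambda}$; but the product formulation has the advantage of covering the non-invertible case uniformly.)

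For the nonzero parts of both spectra I would use the standard transfer arguments for products. For $\mu \neq 0$, if $BA - \mu I$ is invertible with inverse $R$, a direct computation shows that $\tfrac{1}{\mu}\left( A R B - I \right)$ is a two-sided inverse of $AB - \mu I$; by symmetry this gives $\sigma(AB) \setminus \{0\} = \sigma(BA) \setminus \{0\}$, hence $\sigma(T) \setminus \{0\} = \sigma(\Delta_\lambda(T)) \setminus \{0\}$. For the approximate point spectrum, if $x_n$ are unit vectors with $(BA - \mu I)x_n \to 0$ and $\mu \neq 0$, then applying $A$ yields $(AB - \mu I)(Ax_n) \to 0$, while $\|BAx_n\| \to |\mu| > 0$ forces $\|Ax_n\| \ge \|BAx_n\|/\|B\|$ to stay bounded away from $0$; normalizing $Ax_n$ then exhibits $\mu \in \sigma_{ap}(AB)$. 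By symmetry, $\sigma_{ap}(T) \setminus \{0\} = \sigma_{ap}(\Delta_\lambda(T)) \setminus \{0\}$.

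It remains to match the two spectra at $0$. For assertion (i) I would argue the invertibility equivalence by hand: if $\Delta_\lambda(T)$ is invertible then it is surjective, so $\left| T \right|^{\lambda}$ is surjective, hence, being positive, invertible, so $\left| T \right|$ is invertible; then $U = \left| T \right|^{-\lambda} \Delta_\lambda(T) \left| T \right|^{-(1-\lambda)}$ is an invertible isometry, i.e. unitary, whence $T = U\left| T \right|$ is invertible. The reverse implication is immediate, since $T$ invertible makes $\left| T \right|^{\lambda}, U, \left| T \right|^{1-\lambda}$ all invertible. For assertion (ii), since $U$ is isometric on the closure of the range of $\left| T \right|$, one has $\|Tx\| = \|\,\left| T \right| x\|$ for every $x$, so $T$ is bounded below if and only if $\left| T \right|$ is invertible; the remaining task is to prove the same criterion for $\Delta_\lambda(T)$.

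The main obstacle is precisely this last equivalence: $\Delta_\lambda(T)$ is bounded below if and only if $\left| T \right|$ is invertible. The forward estimate is routine, but the converse cannot be obtained by feeding approximate null vectors of $\left| T \right|$ into $\left| T \right|^{1-\lambda}$, because for $1-\lambda < 1$ the map $t \mapsto t^{1-\lambda}$ amplifies small values, and $\|\,\left| T \right| x_n\| \to 0$ need not give $\|\,\left| T \right|^{1-\lambda} x_n\| \to 0$. Instead, when $0 \in \sigma(\left| T \right|)$ I would select unit vectors in the spectral subspace $\mathrm{ran}\,E([0,\epsilon))$ of $\left| T \right|$; on such a vector $x$ the spectral theorem gives $\|\,\left| T \right|^s x\| \le \epsilon^s$ simultaneously for every $s > 0$, so in particular $\|\Delta_\lambda(T)x\| \le \|\,\left| T \right|^{\lambda}\|\,\epsilon^{1-\lambda}$, which tends to $0$ with $\epsilon$. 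This produces approximate null vectors for $\Delta_\lambda(T)$, and combining it with the nonzero part closes both assertions.
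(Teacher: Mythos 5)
Your proposal is correct, and it is genuinely more self-contained than the paper's proof, which is largely assembled from citations. For part (i) the paper simply invokes Huruya's result (\cite[Lemma 5]{Huruya1}); for the nonzero part of (ii) it quotes the Harte--Kim--Lee theorem \cite{Harte1} that $\sigma_{ap}\left( AB \right) \setminus \left\{ 0 \right\} = \sigma_{ap}\left( BA \right) \setminus \left\{ 0 \right\}$ --- exactly your factorization $A = \left| T \right|^{\lambda}$, $B = U \left| T \right|^{1-\lambda}$, except that you actually prove the transfer (the resolvent formula $\frac{1}{\mu}\left( ARB - I \right)$ and the normalized sequence $Ax_n / \left\| Ax_n \right\|$) rather than citing it. At the point $0$, the paper argues in both directions with approximate null sequences; its converse direction is a two-case argument that is somewhat garbled as written (it asserts $\left\| U \left| T \right|^{1-\lambda} x_n \right\| = 1$ ``since $U$ is an isometry,'' which is unjustified --- one only gets a subsequence bounded away from zero, after which normalization repairs the step). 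You instead establish the clean criterion that each of $T$ and $\Delta_{\lambda}\left( T \right)$ is bounded below if and only if $\left| T \right|$ is invertible, with spectral subspaces $\mathrm{ran}\, E\left( \left[ 0, \epsilon \right) \right)$ supplying the approximate null vectors, and your surjectivity argument settles (i) at $0$ without any citation. What your route buys is a uniform, self-contained treatment that covers the non-invertible case; what the paper's buys is brevity.

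One mathematical assertion in your write-up is false, though harmlessly so for your proof: for unit vectors, $\left\| \left| T \right| x_n \right\| \rightarrow 0$ \emph{does} imply $\left\| \left| T \right|^{1-\lambda} x_n \right\| \rightarrow 0$. Indeed, for a positive operator $P$, $0 < s < 1$, and any $x$, H\"older's inequality applied to the spectral measure $\mu_x$ gives $\left\| P^{s} x \right\|^2 = \int t^{2s}\, d\mu_x \le \left( \int t^{2}\, d\mu_x \right)^{s} \left( \int 1\, d\mu_x \right)^{1-s}$, i.e. $\left\| P^{s} x \right\| \le \left\| P x \right\|^{s} \left\| x \right\|^{1-s}$; the map $t \mapsto t^{1-\lambda}$ amplifies small values but they still tend to $0$. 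This is precisely the (unjustified) step the paper uses for the inclusion $\sigma_{ap}\left( T \right) \subseteq \sigma_{ap}\left( \Delta_{\lambda}\left( T \right) \right)$. Your spectral-projection argument sidesteps it and is valid, but the simpler direct route was in fact available.
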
 
\begin{proof}
The equality $\sigma \left(T \right) = \sigma \left( \Delta_{\lambda} \left( T \right) \right)$ was proved before in \cite[Lemma 5]{Huruya1}.\\
Let us assume that $T = U \left| T \right| \in B \left( \mathbb{H} \right)$ is a bounded linear operator on Hilbert space $\mathbb{H}$. For arbitrary $\lambda \in \mathbb{C}$, the $\lambda$-Aluthge transform of $T$ is determinded by
\begin{linenomath}
\begin{align*}
    \Delta _{\lambda} \left( T \right) = \left| T \right| ^{\lambda} U \left| T \right| ^{1-\lambda}.
\end{align*}
\end{linenomath}
Firstly, $\sigma _{ap} \left(T \right) \setminus \left \{ 0 \right \}= \sigma_{ap} \left( \Delta_{\lambda} \left(T \right) \right) \setminus \left \{ 0 \right \}$ as a result of Theorem 1 in \cite{Harte1}.\\
We now suppose that $0 \in \sigma_{ap} \left( T \right)$. By definition, there exists a sequence of unit vectors $\left \{ x_n \right \}_{n \rightarrow \infty}$ with $\left \| x_n \right \| =1$ (for every $n \in \mathbb{N}$) satisfying
\begin{linenomath}
\begin{equation*}
    \lim_{n \rightarrow \infty} \left \| U \left| T \right| x_n  \right \| =0.
\end{equation*}
\end{linenomath}
Consequently, $\left \| \left| T \right| ^{1-\lambda} x _n \right \| \rightarrow 0$ as $n \rightarrow \infty$ for every $\lambda \in \left( 0,1\right)$. It follows that 
\begin{linenomath}
\begin{align*}
    \Delta_{\lambda} \left( T \right) = \left \| \left| T \right| ^{\lambda} U \left| T \right| ^{1-\lambda} x_n \right \| \rightarrow 0 \quad \text{as } n \rightarrow \infty.
\end{align*}
\end{linenomath}
That means $0 \in \sigma _{ap} \left( \Delta _{\lambda} \left( T \right) \right)$ for every $\lambda \in \left( 0,1 \right)$.\\
Therefore, $\sigma_{ap} \left( T \right) \subseteq \sigma _{ap}\left( \Delta _{\lambda}\left( T\right) \right)$ for all $\lambda \in \left( 0,1 \right)$.\\
On the other sides, we suppose that $0 \in \sigma_{ap} \left( \Delta _{\lambda} \left( T \right) \right)$ for every $\lambda \in \left( 0,1\right)$. By hypothesis, there is a sequence $\left \{ x_n \right \} _{n \in \mathbb{Z}}$ of unit vectors in $\mathbb{H}$ so that
\begin{linenomath}
\begin{align*}
    \lim_{n \rightarrow \infty} \left \| \left| T \right| ^{\lambda} U \left| T \right|^{1-\lambda} x_n \right \| =0.
\end{align*}
\end{linenomath}
There are two subcases. \\
Firstly, for an arbitrary $\lambda \in \left( 0,1 \right)$, if $\left \| \left| T\right| ^{1-\lambda}x_n \right \| \rightarrow 0 $ then obviously $\left \| Tx_n \right \| = \left \| U \left| T \right|^{\lambda} \left| T\right|^{1-\lambda}x_n \right \| \rightarrow 0$ as $n \rightarrow \infty$. Secondy, for every $\lambda \in \left( 0,1 \right)$, if $\left\| \left| T \right|^{1-\lambda} x_n \right\|$ does not converge to $0$ then $\left \| U \left| T \right| ^{1-\lambda} \right \|$ does not converge to $0$ as $n \rightarrow \infty$. However, $\left| T \right|^{1-\lambda}$ maps $\left \{ U \left| T \right|^{1-\lambda} x_n \right \}$  to a null sequence in norm so $\left \| T \left( U \left| T  \right|^{1-\lambda}x_n \right) \right \| \longrightarrow 0$ when $n \longrightarrow \infty$ as a consequence. Since $U$ is an isometry, we get $\left\|U \left| T \right|^{1-\lambda} x_n \right \| =1$. Hence, $0 \in \sigma_{ap}\left( T \right)$ and $\sigma_{ap} \left( \Delta _{\lambda}\left( T \right) \right) \subseteq \sigma _{ap} \left( T \right)$.\\
Totally, the proof has been completed.
\end{proof}
As a consequence, the invariance of quasi-hyperbolicity of $T$ under the action of $\lambda$-Aluthge transforms for all $\lambda\in \left( 0,1 \right)$ is excuted.
\begin{defn}
\label{def quasi-hyperbolic}
Let $T \in B \left( X \right)$ be an invertible operator on a Banach space $X$. $T$ is said to be a hyperbolic operator if $\sigma \left( T \right) \cap \mathbb{T} = \varnothing$ in which $\mathbb{T}$ is the unit circle of complex plane $\mathbb{C}$.\\
We shall say that $T\in B \left( \mathbb{H} \right)$ is quasi-hyperbolic if there exists $n \in \mathbb{N}$ (independent of $x$) such that
\begin{linenomath}
\begin{align*}
    \max \left( \left \| T^{2n} x \right \|, \left \| x \right \| \right) \ge 2 \left \| T^{n} x \right \|, \quad \text{for all } x \in \mathbb{H}.
\end{align*}
\end{linenomath}
\end{defn}
\begin{prop}
\label{prop quasihyperbolic is preserved}
For a bounded linear operator $T \in B \left( \mathbb{H}\right)$, $T$ is quasi-hyperbolic if and only if its $\lambda$-Aluthge transform is for every $\lambda \in \left( 0,1 \right)$.
\end{prop}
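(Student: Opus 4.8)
The plan is to recast quasi-hyperbolicity as a purely spectral condition and then transport it through the $\lambda$-Aluthge transform using the invariance of the approximate point spectrum established in Theorem \ref{theorem spectral property of lambda Aluthge}. Concretely, I would first record the characterization that an operator $S \in B(\mathbb{H})$ is quasi-hyperbolic in the sense of Definition \ref{def quasi-hyperbolic} if and only if $\sigma_{ap}(S) \cap \mathbb{T} = \varnothing$, where $\mathbb{T}$ is the unit circle. This is the bridge between the dynamical inequality and the spectral data that the $\lambda$-Aluthge transform is known to preserve (see \cite{Bernades1}).

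For the justification of this characterization, the contrapositive of one direction is elementary and I would write it out in full. If $\mu \in \sigma_{ap}(S) \cap \mathbb{T}$, choose unit vectors $x_k$ with $\|(S - \mu I)x_k\| \to 0$. Using the factorization $S^j - \mu^j I = (S - \mu I)\sum_{i=0}^{j-1}\mu^{i}S^{j-1-i}$, where the second factor is a fixed bounded operator, one gets $\|S^j x_k - \mu^j x_k\| \to 0$ for each fixed $j$; since $|\mu| = 1$ this forces $\|x_k\| = 1$, $\|S^n x_k\| \to 1$ and $\|S^{2n} x_k\| \to 1$, so for every fixed $n$ and every large $k$ the inequality $\max(\|S^{2n}x_k\|, \|x_k\|) \ge 2\|S^n x_k\|$ fails. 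Hence no admissible $n$ exists and $S$ is not quasi-hyperbolic. The reverse implication, that a spectral gap off the unit circle yields a single uniform exponent $n$ realizing the inequality for \emph{all} $x$, is the substantive half, and here I would invoke the corresponding result from the linear-dynamics literature rather than reprove it.

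With the characterization in hand the proposition is immediate. By Theorem \ref{theorem spectral property of lambda Aluthge}(ii) we have $\sigma_{ap}(T) = \sigma_{ap}(\Delta_\lambda(T))$ for every $\lambda \in (0,1)$, whence $\sigma_{ap}(T) \cap \mathbb{T} = \sigma_{ap}(\Delta_\lambda(T)) \cap \mathbb{T}$. Therefore one of these sets is empty exactly when the other is, and applying the characterization to both $T$ and $\Delta_\lambda(T)$ shows that $T$ is quasi-hyperbolic if and only if $\Delta_\lambda(T)$ is, for every $\lambda \in (0,1)$.

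The main obstacle is precisely the non-trivial half of the spectral characterization: proving that $\sigma_{ap}(S) \cap \mathbb{T} = \varnothing$ supplies a uniform $n$, independent of $x$, for the quantitative inequality of Definition \ref{def quasi-hyperbolic}. I would also note that the similarity argument used in Theorem \ref{theorem bounded shadowing is preserved} is unavailable here, since $T$ is not assumed invertible and so $|T|^{\frac{1}{2}}$ need not be boundedly invertible; this is why routing the equivalence through the preserved spectrum, rather than through a Lipeomorphism conjugacy, is the natural strategy.
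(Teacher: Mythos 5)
Your proposal is correct and follows essentially the same route as the paper: the paper likewise reduces quasi-hyperbolicity to the Batty--Tomilov characterization $\sigma_{ap}(T)\cap\mathbb{T}=\varnothing$ (citing \cite{Batty1}, which is the right source for this, rather than \cite{Bernades1}) and then applies the invariance $\sigma_{ap}(T)=\sigma_{ap}\left(\Delta_{\lambda}\left(T\right)\right)$ from Theorem \ref{theorem spectral property of lambda Aluthge} to conclude. Your extra material --- writing out the elementary direction of the characterization and observing that the Lipeomorphism/similarity argument of Theorem \ref{theorem bounded shadowing is preserved} is unavailable without invertibility --- is sound but goes beyond what the paper records.
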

\begin{proof}
C. J. K. Batty and Y. Tomilov showed in \cite{Batty1} that $T$ is quasi-hyperbolic if and only if $\sigma_{ap} \left( T \right)$ does not intersect with the unit circle $\mathbb{T}$ on complex plane $\mathbb{C}$. In other words,
\begin{linenomath}
\begin{align*}
    \sigma_{ap} \left( T \right) \cap \mathbb{T} = \varnothing.
\end{align*}
\end{linenomath}
For every $\lambda \in \left( 0,1 \right)$, the equality $\sigma_{ap} \left( T \right) = \sigma_{ap} \left( \Delta _{\lambda}\left( T \right) \right)$ in \ref{remark spectral property} provides $\sigma_{ap} \left( \Delta_{\lambda} \left( T \right) \right) \cap \mathbb{T} = \varnothing$. Thus $\Delta_{\lambda} \left( T \right)$ is quasi-hyperbolic for every $\lambda\in\left(0,1\right)$.
\end{proof}

\end{document}